\newtheorem{Theorem}{Theorem}
\newtheorem{Lemma}[Theorem]{Lemma}
\newtheorem{Proposition}[Theorem]{Proposition}
\newtheorem*{Theoremm}{Theorem}
\theoremstyle{remark}
\newtheorem{Remark}[Theorem]{Remark}
\newtheorem*{Question}{Question}
\newbox\squ  
\def\gr{\operatorname{gr}}
\def\ad{{\operatorname{ad}}}
\def\ind{{\operatorname{ind}\,}}
\def\Der{{\operatorname{Der}}}
\def\Ker{{\operatorname{Ker}}}
\def\GK{{\operatorname{GKdim}\,}}
\newcommand{\oL}{{\overline{L}}}
\def\k{{\mathbbm k}}
\def\A{{\mathcal A}}
\def\0{{\bar 0}}
\def\1{{\bar 1}}
\newdimen\Hoogte    \Hoogte=10pt    
\newdimen\Breedte   \Breedte=10pt   
\newdimen\Dikte     \Dikte=0.5pt    
\begin{document}
\title{A non-restricted counterexample to the first Kac--Weisfeiler conjecture}
\author{Lewis Topley}
\maketitle

\begin{abstract}
In 1971 Kac and Weisfeiler made two important conjectures regarding the representation theory of restricted
Lie algebras over fields of positive characteristic. The first of these predicts the maximal dimension of the
simple modules, and can be stated without the hypothesis that the Lie algebra is restricted. In this short article
we construct the first example of a non-restricted Lie algebra for which the prediction of the first Kac--Weisfeiler conjecture fails.
Our method is to present pairs of Lie algebras which have isomorphic enveloping algebras but distinct indexes.
\end{abstract}
\medskip
\noindent \textbf{Keywords.} modular Lie algebras; irreducible representations.

\medskip
\noindent \textbf{MSC.} primary 17B50. 

\section{Introduction}

Let $\k$ be an algebraically closed field of positive characteristic $p$ and $L$ be a finite dimensional Lie algebra over $\k$.
It is well known that all simple modules have finite dimension and that the dimensions are uniformly bounded above by some integer.
We denote by $M(L)$ the least upper bound of dimensions of simple $L$-modules. We remind the reader that
the index of $L$, denoted $\ind L$, is the minimal dimension of a stabiliser of an element of the coadjoint
representation. The number $\dim L-\ind L$ is easily seen to be even and the first Kac--Weisfeiler conjecture (KW1)
predicts that the index of a restricted Lie algebra $L$ is involved in the representation theory in the following way:
\begin{eqnarray}\label{KWSt}
M(L) = p^{\frac{1}{2}(\dim L - \ind L)}
\end{eqnarray}

\noindent \cite[\S 1.2]{KW71}. The conjecture is striking for both its simplicity and its generality, and has attracted much attention over the past 45 years.
Since the statement may be phrased without the hypothesis that $L$ is restricted, there has been some small hope that it may hold in general.
In this paper we shall show that for certain non-restricted Lie algebras (\ref{KWSt}) fails. These are the very first examples of this kind in the literature.

For a given Lie algebra $L$ the problem of calculating $\ind L$ belongs to the realm of elementary linear algebra
and the meat of the KW1 conjecture lies in computing $M(L)$. There is no procedure for determining this invariant
in general, and practically nothing is known about representations of Lie algebras which are not restricted,
which is undoubtedly why it has taken so long for (\ref{KWSt}) to be refuted for non-restricted algebras.
The most general result appears in \cite[Thm. 4.4]{PS99} where it is shown that if a restricted Lie algebra $L$
admits a $\chi \in L^*$ such that $L_\chi$ is a torus then KW1 holds for $L$.

Over the past 10 years, various authors have been studying the isomorphism problem for enveloping algebras (see \cite{RU07} for example).
In its most general form, the question is: \emph{can two non-isomorphic Lie algebras admit isomorphic
enveloping algebras?} For finite dimensional Lie algebras over fields of characteristic zero there are no known
examples of this pathalogical behaviour, however in characteristic $p$ such algebras are not hard to construct
(we shall see new examples of this phenomenon in Proposition~\ref{isoprop}). Several weaker variants of the isomorphism problem have been considered,
asking which properties are shared by Lie algebras $L$ and $L'$ such that $U(L) \cong U(L')$, for instance
nilpotence, solvability, derived length. The key observation of this article is that (\ref{KWSt}) implies a weak variant of the isomorphism
problem: if (\ref{KWSt}) holds for all $\k$-Lie algebras and $U(L) \cong U(L')$ then $\ind L = \ind L'$.
This is simply because both $M(L)$ and $\dim(L)$ depend only upon the isomorphism class of $U(L)$;
in the language of \cite{RU07} we would say that $\ind L$ is determined by $U(L)$.
Our method is to disprove this corollary of (\ref{KWSt}) by exhibiting two Lie algebras with isomorphic enveloping algebras
but distinct indexes.

For any set $X$ we use the notation $\langle X \rangle$ to denote the vector space spanned by $X$.
We now describe a family of examples for which (\ref{KWSt}) fails. Let $k \geq 3$ and let $L$ be the Lie algebra
$\langle x_1, x_2, ... x_k, D_0, D \rangle$ such that $D_0$ is central, $\langle x_1,... x_{k}\rangle$ is abelian, whilst
\begin{eqnarray*}
& & [D, x_i] = x_i \text{ for } i=1,...,k-2,\\
& & {[}D, x_{k-1}] = x_k,\\
& & {[}D, x_k] = 0.
\end{eqnarray*}
In this article we shall prove that:
\begin{Theoremm}
We have $p^2 | M(L)$ and $\ind L = k$ so that $M(L) \neq p^{\frac{1}{2}(\dim(L) - \ind(L))}$.
\end{Theoremm}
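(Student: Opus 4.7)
My plan splits the statement into its two assertions, handled by rather different arguments. For the index, since $D_0$ is central and the $x_i$ commute pairwise, the only nontrivial brackets in $L$ come from $\ad D$: in the ordered basis $(D,D_0,x_1,\dots,x_k)$, the matrix of the Kirillov form $B_\chi(y,z):=\chi([y,z])$ is nonzero only in the $D$-row and $D$-column, with $D$-row equal to $\bigl(0,0,\chi(x_1),\dots,\chi(x_{k-2}),\chi(x_k),0\bigr)$. Its rank is at most $2$ and equals $2$ for generic $\chi$, so $\ind L=\dim L-2=k$.

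For the divisibility $p^2\mid M(L)$, I would exhibit a second Lie algebra $L'\subset U(L)$ of the same dimension with $U(L')\cong U(L)$ but smaller index. Using the identity $[D^p,y]=(\ad D)^p(y)$ in $U(L)$, set $E_0:=D_0+D-D^p$; then $[E_0,x_{k-1}]=x_k$, while $[E_0,x_i]=0$ for $i\ne k-1$ and $[E_0,D]=0$. The subspace $L':=\langle x_1,\dots,x_k,D,E_0\rangle\subset U(L)$ is therefore a $(k+2)$-dimensional Lie subalgebra, and the algebra map $U(L)\to U(L')$ defined on generators by $D_0\mapsto E_0-D+D^p$, $D\mapsto D$, $x_i\mapsto x_i$ is inverse to the obvious inclusion-induced map, so $U(L)\cong U(L')$. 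Changing basis by $F:=D-E_0$ displays $L'$ as the direct Lie-algebra sum $L'=\a\oplus\h$, where $\a:=\langle F,x_1,\dots,x_{k-2}\rangle$ is a $(k-1)$-dimensional solvable algebra in which $F$ acts on the abelian ideal $\langle x_1,\dots,x_{k-2}\rangle$ by the identity, and $\h:=\langle E_0,x_{k-1},x_k\rangle$ is the three-dimensional Heisenberg algebra.

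A direct check shows that $\a$ admits an irreducible $p$-dimensional representation on $\k^p$ (let $x_1$ act as a cyclic shift, the other generators $x_r$ as scalar multiples of $x_1$, and $F$ diagonally with eigenvalues $0,1,\dots,p-1$), and the Heisenberg $\h$ admits the standard irreducible $p$-dimensional representation whenever the central generator $x_k$ acts by a nonzero scalar. Since $\a$ and $\h$ commute and $\k$ is algebraically closed, Schur's lemma makes their external tensor product an irreducible $L'$-module of dimension $p^2$; hence $M(L')\ge p^2$. Because $M$ depends only on the isomorphism class of the enveloping algebra, $M(L)=M(L')\ge p^2$, and since $M(L)$ is a power of $p$ this forces $p^2\mid M(L)$. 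The step requiring the most care is the verification $U(L)\cong U(L')$: because $E_0$ is of filtered degree $p$ inside $U(L)$, no standard PBW comparison applies directly, and the isomorphism must be established by writing down both homomorphisms and checking that the compositions act as the identity on generators.
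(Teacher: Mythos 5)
Your proposal is correct, and in its two substantive steps it takes a genuinely different route from the paper. The index computation is the same argument in different clothing (the paper writes out the vanishing conditions on $\chi[X,Y]$ by hand; you package them as the rank of the Kirillov form, which is supported on the $D$-row and $D$-column and hence has rank $2$ generically). The real divergences are: (a) for $U(L)\cong U(L')$, the paper proves surjectivity of a map $U(L_{D})\twoheadrightarrow U(L_{D_0})$ via the universal property and then deduces injectivity from a Gelfand--Kirillov dimension argument on the associated graded algebra, whereas you write down explicit mutually inverse homomorphisms ($D_0\mapsto E_0-D+D^p$ one way, the inclusion the other) and check both composites on generators; this is cleaner and avoids the GK machinery, and the well-definedness you flag does hold because $E_0-D+D^p$ is central in $U(L')$ by the same bracket computations that show $[E_0,x_i]=\delta_{i,k-1}x_k$. (b) For the lower bound $M(L)\geq p^2$, the paper computes $\ind L'\leq k-2$ and invokes the Premet--Skryabin divisibility theorem \cite[\S 5.4, Remark 1]{PS99} to get $p^{2+s}\mid M(L')$, whereas you exploit the direct-sum decomposition $L'=\a\oplus\h$ (solvable piece plus Heisenberg) to build an explicit irreducible module of dimension $p^2$ as an external tensor product; your route is more elementary and self-contained, at the cost of a hands-on irreducibility check (which works: $F$ and the cyclic shift generate $\Mat_p(\k)$, as do the Heisenberg operators when the centre acts by a nonzero scalar, so the tensor product algebra acts as $\Mat_{p^2}(\k)$). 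Two small points: your final step "$M(L)$ is a power of $p$" is Zassenhaus's theorem \cite{Za54} and should be cited if you want the literal divisibility $p^2\mid M(L)$ rather than just $M(L)\neq p$ (which already suffices to refute the conjectured equality); and your $L'$ differs from the paper's by a change of basis ($E_0=D_0+D-D^p$ versus $D^p+D_0$), but both decompose as $\a\oplus\h$, so nothing is lost.
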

\noindent Note that the example above is not restrictable, since $\ad(D)^p \notin \ad(L)$.

\begin{Question}\label{KWQ}
\begin{enumerate}
\item[i)]{Does there exist a restricted Lie algebra for which KW1 fails?}
\item[ii)]{Do there exist two restricted Lie algebras $L$ and $L'$ with $U(L) \cong U(L')$ and $\ind L \neq \ind L'$?}
\end{enumerate}
\end{Question}
A positive answer to question (ii) would imply a positive answer to question (i), whilst a negative answer to (ii)
would offer supporting evidence for the KW1 conjecture, as well as having independent value in the context of the isomorphism problem.

\noindent {\bf Acknowledgement.} I would like to thank Stephane Launois and Alexander Premet for useful comments on a first draft of this article. I would also
like to thank colleagues Giovanna Carnovale and Jay Taylor at the University of Padova for many interesting discussions. The research
leading to these results has received funding from the European Commission, Seventh Framework Programme, under Grant Agreement
number 600376, as well as grants CPDA125818/12 and 60A01-4222/15 from the University of Padova.

\section{Lie algebras with isomorphic enveloping algebras}
In this section we prove a basic result which allows us to construct families of 
Lie algebras which have isomorphic enveloping algebras. For any Lie algebra $L$ we consider the
restricted closure $\oL$ of $\ad(L)$ inside $\Der(L)$, ie. the smallest restricted subalgebra of $\Der(L)$
containing $\ad(L)$.
\begin{Lemma}
Every element of $\oL$ is of the form $\sum_{i=0}^k \ad(X_i)^{p^i}$ for some $k \geq 0$ and elements $X_1,...,X_k \in L$.
\end{Lemma}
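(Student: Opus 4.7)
The plan is to let
\begin{equation*}
T := \Bigl\{\textstyle\sum_{i=0}^k \ad(X_i)^{p^i} \,:\, k \geq 0,\; X_0,\dots,X_k \in L\Bigr\}
\end{equation*}
and to prove $T = \oL$. The containment $T \subseteq \oL$ is immediate, since $\oL$ contains $\ad(L)$ and is closed under addition and the $p$-th power in $\Der(L)$. By the minimality of $\oL$, the reverse inclusion follows once $T$ is shown to be a restricted Lie subalgebra of $\Der(L)$ containing $\ad(L)$. The containment $\ad(L) \subseteq T$ is clear (take $k = 0$), and closure under scalar multiplication follows from the perfectness of $\k$: $c\cdot \ad(X)^{p^i} = \ad(c^{1/p^i}X)^{p^i}$.

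The remaining closures all rest on the elementary identity
\begin{equation*}
[D, \ad(W)] = \ad(DW) \qquad (D \in \Der(L),\; W \in L),
\end{equation*}
which exhibits $\ad(L)$ as an ideal of $\Der(L)$. Combined with the restricted identity $[u^{[p^n]}, v] = (\ad u)^{p^n}(v)$ inside $\Der(L)$ and a straightforward induction, this yields
\begin{equation*}
[\ad(X)^{p^i}, \ad(Y)^{p^j}] = \ad\bigl(\ad(Y)^{p^j - 1}\ad(X)^{p^i}(Y)\bigr) \in \ad(L) \subseteq T,
\end{equation*}
so by bilinearity $T$ is closed under the Lie bracket of $\Der(L)$.

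Closure under addition and under the $p$-th power are both handled by Jacobson's formula $(A+B)^{[p]} = A^{[p]} + B^{[p]} + \Lambda_p(A,B)$, where $\Lambda_p$ is a Lie polynomial in $A$ and $B$. For addition, the rewriting $\ad(X)^{p^i} + \ad(Y)^{p^i} = \ad(X+Y)^{p^i} - \Lambda_{p^i}(\ad X, \ad Y)$ lets one combine same-power terms in the sum of two elements of $T$ into a single $\ad(\cdot)^{p^i}$ modulo an element of $\ad(L)$, which is absorbed into the $i = 0$ slot. For the $p$-th power of $\sum_i \ad(X_i)^{p^i}$, iterating Jacobson produces $\sum_i \ad(X_i)^{p^{i+1}}$ plus a Lie polynomial in the $\ad(X_i)^{p^i}$'s; by the ideal property, every iterated commutator of two or more such terms lies in $\ad(L) \subseteq T$.

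The only real obstacle is the bookkeeping required to verify that every Jacobson correction term genuinely lies in $\ad(L)$. This is a routine induction resting on the single identity $[D, \ad(W)] = \ad(DW)$ together with the fact that $\ad\colon L \to \Der(L)$ is a Lie algebra homomorphism, so that all iterated brackets of $\ad(X_i)$'s reduce to $\ad$ of an element of $L$.
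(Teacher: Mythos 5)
Your proof is correct and follows essentially the same route as the paper's: both arguments show that the set of sums $\sum_{i}\ad(X_i)^{p^i}$ is a restricted subalgebra of $\Der(L)$ containing $\ad(L)$ --- using Jacobson's formula for additivity and $p$-th powers, and the ideal property $[D,\ad(W)]=\ad(DW)$ for closure under brackets --- and then conclude by minimality of $\oL$. The one caveat (shared with, and no worse than, the paper's own wording) is that for $i\geq 2$ the Jacobson correction to $\ad(X+Y)^{p^i}$ lies in $\sum_{j<i}\ad(L)^{p^j}$ rather than in $\ad(L)$ alone, since the term $\Lambda_p(\ad X,\ad Y)^{p^{i-1}}$ appearing when one iterates the formula is a $p$-power of an element of $\ad(L)$; so the absorption must proceed by induction on the top exponent rather than into the $i=0$ slot, which your closing remark about a routine induction adequately covers.
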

\begin{proof}
We start by showing that for each $k \geq 0$ the sum $\sum_{i=0}^k \ad(L)^{p^i}$ is a vector space.
The case $k = 0$ is obvious so we may proceed by induction. Using the formulas derived in Chapter~$2$ of \cite{FS88} we have
$\ad(X  + Y)^{p^k} = \ad(X)^{p^k} + \ad(Y)^{p^k} \mod \ad(L)$ and so
$$\sum_{i=0}^k(\ad(X_i)^{p^i} + \ad(Y_i)^{p^i}) = \sum_{i=0}^{k-1} (\ad(X_i)^{p^i} + \ad(Y_i)^{p^i}) + \ad(X + Y)^{p^k} \mod \ad(L)$$
and so by induction $\sum_{i\geq 0} \ad(L)^{p^i}$ is a vector space.

If $X_1 = \ad(X)$ and $Y_1 = \ad(Y)$ then $[X_1^{p^k}, Y_1^{p^j}] = \ad(X_1)^{p^k-1}(\ad(Y_1)^{p^j} X_1$ and so $\sum_{i\geq 0} \ad(L)^{p^i}$
is closed under the bracket. Using the same formulas mentioned in the first paragraph of the proof it is clear that $\sum_{i\geq 0} \ad(L)^{p^i}$
is closed under taking $p$th powers, and so it is a restricted algebra containing $\ad(L)$. It is easy to see that it is the smallest such algebra.
\end{proof}

For $D \in \Der(L)$ we write $L_D$ for the semidirect product $L \rtimes \k D$.

\begin{Proposition}\label{isoprop}
For every $D, D' \in \oL$ we have $U(L_D) \cong U(L_{D'})$.
\end{Proposition}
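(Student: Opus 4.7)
The plan is to show that for any $D \in \oL$, the enveloping algebra $U(L_D)$ is isomorphic to a polynomial ring $U(L)[z]$ in one central variable over $U(L)$; since this description is independent of $D$, the proposition then follows at once by matching the central variables of $U(L_D)$ and $U(L_{D'})$.

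Using the preceding lemma I would write $D = \sum_{i=0}^k \ad(X_i)^{p^i}$ with $X_0, \ldots, X_k \in L$, and set $\tilde X \coloneqq \sum_{i=0}^k X_i^{p^i} \in U(L) \subseteq U(L_D)$. The decisive observation is that the element $z \coloneqq D - \tilde X$ commutes with every $y \in L$ inside $U(L_D)$. To verify this I would appeal to the classical Jacobson identity
$$[x^p, y] = \ad(x)^p(y) \qquad (x \in L,\ y \in U(L)),$$
which follows from writing $\ad(x) = L_x - R_x$ as a difference of commuting left and right multiplications on $U(L)$ and invoking the Frobenius binomial $(A-B)^p = A^p - B^p$ for commuting operators in characteristic $p$. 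This very identity shows that $\ad(x^p) = \ad(x)^p$ as operators on $U(L)$, so iteration yields $[x^{p^i}, y] = \ad(x)^{p^i}(y)$ for every $i \geq 0$. Summing over $i = 0, \ldots, k$ and using the relation $[D, y] = D(y)$ in $U(L_D)$, one obtains $[\tilde X, y] = [D, y]$, hence $[z, y] = 0$.

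Since $L$ generates $U(L)$ as an algebra, $z$ is in fact central in the subalgebra $U(L)[z] \subseteq U(L_D)$; and since $D = z + \tilde X$ lies in $U(L)[z]$, this subalgebra is all of $U(L_D)$. To pin down the polynomial-ring structure I would invoke the Poincar\'e--Birkhoff--Witt theorem: $\{D^n\}_{n \geq 0}$ is a free left $U(L)$-basis of $U(L_D)$, and the substitution $D = z + \tilde X$ expresses each $z^n = D^n + (\text{lower powers of } D \text{ with coefficients in } U(L))$, so that the change of basis is unitriangular and $\{z^n\}_{n \geq 0}$ is also a left $U(L)$-basis. It follows that $U(L_D) \cong U(L) \otimes_\k \k[z]$ as $U(L)$-algebras, and similarly $U(L_{D'}) \cong U(L) \otimes_\k \k[z']$, producing the desired isomorphism by identifying $z$ with $z'$.

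The main obstacle is the verification of the iterated Jacobson identity $[x^{p^i}, y] = \ad(x)^{p^i}(y)$, since this is the mechanism by which $\tilde X$ acts on $L$ exactly as $D$ does and thereby produces the central element $z$; once this is in hand, the remainder is a routine PBW calculation.
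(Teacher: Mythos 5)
Your argument is correct, and it reaches the conclusion by a genuinely different route from the paper, even though both hinge on the same key element $\sum_i X_i^{p^i}$ and the same Jacobson-type identity $\ad(x^{p^i})=\ad(x)^{p^i}$ on $U(L)$. The paper defines a Lie homomorphism $L_D\to U(L_{D_0})$ sending $D\mapsto \sum_i X_i^{p^i}+D_0$, obtains a surjection $\Phi:U(L_D)\twoheadrightarrow U(L_{D_0})$, and then proves injectivity indirectly by comparing Gelfand--Kirillov and Krull dimensions of associated graded algebras. You instead exhibit the central element $z=D-\sum_i X_i^{p^i}$ inside $U(L_D)$ and identify $U(L_D)$ with the central polynomial extension $U(L)\otimes_\k\k[z]$ via a unitriangular change of PBW basis; since this description does not mention $D$, the isomorphism $U(L_D)\cong U(L_{D'})$ is immediate and explicitly bijective. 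Your route avoids the GK-dimension machinery entirely and yields a sharper structural statement (a canonical presentation of $U(L_D)$ as a polynomial ring in one central variable over $U(L)$), at the cost of having to verify the centrality of $z$ and the basis change carefully --- both of which you do correctly: $[z,y]=0$ for $y\in L$ by the iterated Jacobson identity, $[z,D]=[z,z+\tilde X]=0$ since $\tilde X\in U(L)$, and $z^n\in D^n+\sum_{j<n}U(L)D^j$ gives the free $U(L)$-basis $\{z^n\}$. The one cosmetic remark is that your isomorphism is of $\k$-algebras restricting to the identity on $U(L)$, which is more than the proposition demands.
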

\begin{proof}
Let $D_0$ denote the zero derivation of $L$. We shall show that $U(L_D) \cong U(L_{D_0})$ for every
$D\in \oL$. According to the previous lemma we can write
$D = \sum_{i=0}^k \ad(X_i)^{p^i}$ for some $k \geq 0$ and elements $X_1,...,X_k \in L$.

We define a linear map
\begin{eqnarray*}
\phi& : & L_{D} \lhook\joinrel\longrightarrow U(L_{D_0});\\
& & L \overset{\operatorname{Id}}{\longmapsto} L;\\
& & D \longmapsto \sum_{i=0}^k X_i^{p^i} + D_0.
\end{eqnarray*}
By construction $\phi[X,Y] = \phi(X) \phi(Y) - \phi(Y) \phi(X)$ for all $X, Y \in L_D$.  Furthermore,
every element of $L_{D_0} \subseteq U(L_{D_0})$ lies in the algebra generated by the image and so, by the
universal property of the enveloping algebra there is a surjective algebra homomorphism $\Phi : U(L_{D}) \twoheadrightarrow U(L_{D_0})$. 

To see that the map is injective we appeal to the graded algebra, as follows.
Suppose that $I = \Ker \Phi$ is a nonzero ideal of $U(L_D)$. Then $U(L_D) / I \cong U(L_{D_0})$ and, in particular,
their Gelfand--Kirillov dimensions coincide. By \cite[Prop. 8.1.15(iii)]{MR01} have $$\dim L = \GK U(L_{D_0}) = \GK (U(L_D)/I).$$
The PBW filtration on $U(L_D)$ induces a filtration on $U(L_D)/I$ and, according to Proposition 7.6.13 of {\it op. cit.}
we have $$\gr (U(L_D)/I) \cong \gr U(L_D) / \gr I \cong S(L_D) / \gr I.$$ Now Proposition~8.1.14 in \emph{op. cit.}
tells us that $$\GK (U(L_D)/I) = \GK (S(L_D ) / \gr I).$$ Since $S(L_D) / \gr I$ is a commutative affine algebra,
Theorem 8.2.14(i) in that same book tells us that $\GK (S(L_D) / \gr I)$ is equal to the Krull dimension of $S(L_D) / \gr I$,
which is necessarily less than $$\GK S(L_D) = \dim L_D = \dim L_{D_0},$$ since $S(L_D) / \gr I$ is a proper quotient. This contradiction
tells us that $I = 0$ as desired.
\end{proof}

\section{Calculating indexes}
We continue to let $\k$ have characteristic $p > 0$ and pick $k \geq 3$.
Let $\A = \langle x_1,..., x_k\rangle$ be an abelian Lie algebra and define a derivation of
$\A$ by $D(x_1) = x_i$ for all $i=1,...,{k-2}$, $D(x_{k-1}) = x_k$, $D(x_k) = 0$. We consider
the semidirect product $\A_D$. Now denote by $D_0$ the zero derivation of $\A_D$, and write
$D'$ for the derivation $D^p$ of $\A_D$. Define 
\begin{eqnarray*}
L:= (\A_D)_{D_0},\\
L' := (\A_D)_{D'}.
\end{eqnarray*}
According to Proposition~\ref{isoprop} we have $U(L) \cong U(L')$.
\begin{Lemma}\label{indexLemma}
We have
\begin{eqnarray*}
& & \ind L = k;\\
& & \ind L' \leq k-2.
\end{eqnarray*}
\end{Lemma}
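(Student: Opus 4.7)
The plan is to use the standard identity $\ind \g = \dim \g - \max_{\xi \in \g^*} \rank B_\xi$, where $B_\xi(X,Y) := \xi([X,Y])$ is the Kirillov form; equivalently, the stabiliser of $\xi$ under the coadjoint action has dimension $\dim \g - \rank B_\xi$. In each of the two cases I will fix a basis, read off the matrix of $B_\xi$, and either bound its rank uniformly or exhibit a specific $\xi$ realising a sufficiently large rank.

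For $\ind L$: working in the basis $x_1, \ldots, x_k, D, D_0$, the centrality of $D_0$ and the abelianness of $\A$ force every nonzero bracket in $L$ to involve the element $D$. Hence the matrix of $B_\xi$ is supported only in the $D$-row and $D$-column, so $\rank B_\xi \leq 2$ for every $\xi$. The bound is attained already at $\xi = x_1^*$, and consequently $\ind L = (k+2) - 2 = k$.

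For $\ind L' \leq k-2$: the first step is to identify $D'$ with $\ad(D)^p$ as a derivation of $\A_D$. Using that $\ad(D)$ acts as the identity on $x_1, \ldots, x_{k-2}$ and satisfies $\ad(D)^2 = 0$ on $\langle x_{k-1}, x_k\rangle$, one sees that $D'$ acts as the identity on $x_1, \ldots, x_{k-2}$ and vanishes on $x_{k-1}, x_k$ and $D$. The key observation is then that the $D$-row and the $D'$-row of $B_\xi$ are no longer proportional: the $D$-row picks up the coordinate $\xi(x_k)$ via the bracket $[D,x_{k-1}] = x_k$, whereas the $D'$-row does not. Taking $\xi$ with $\xi(x_1) = \xi(x_k) = 1$ and all other coordinates zero, the four rows of $B_\xi$ indexed by $D, D', x_1, x_{k-1}$ will be visibly linearly independent, forcing $\rank B_\xi \geq 4$ and therefore $\ind L' \leq (k+2) - 4 = k-2$.

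The main hurdle is really the first step of the second case: correctly computing the action of $D' = \ad(D)^p$ on $\A_D$ and then selecting a $\xi$ that exploits the Jordan-block structure on $\langle x_{k-1}, x_k\rangle$ to separate the $D$- and $D'$-rows. Once those ingredients are in hand, verifying the linear independence of the four chosen rows is routine.
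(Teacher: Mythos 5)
Your proposal is correct and follows essentially the same route as the paper: both arguments compute coadjoint stabilisers, i.e.\ the kernel (equivalently the rank) of the alternating form $B_\chi(X,Y)=\chi([X,Y])$, directly from the structure constants, and both exhibit an explicit functional witnessing each bound. The only differences are cosmetic --- you obtain $\ind L\ge k$ from the support of the matrix of $B_\xi$ rather than from explicit stabiliser equations, and for $L'$ you use the sparser functional $x_1^*+x_k^*$ in place of the paper's $x_1^*+\cdots+x_{k-2}^*+x_k^*$; both choices give $\rank B_\xi\ge 4$ in every characteristic.
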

\begin{proof}
Pick $\chi \in L^*$ and observe that $[L, L] = \langle x_1 x_2,...,x_{k-2}, x_k\rangle$, which implies that $L_\chi$
is completely determined by $(\chi(x_1), \chi(x_2) , ...,\chi(x_{k-2}), \chi(x_k)) \in \k^{k-1}$.
Choosing scalars $a_i, b_i, n_j, m_j\in \k$ with $i =1,...,k$ and $j = 1,2$ determines two elements of $L$:
\begin{eqnarray*}
X = \sum_{i} a_ix_i + n_1 D + n_2 D_0;\\
Y = \sum_i b_i x_i + m_1 D + m_2 D_0.
\end{eqnarray*}
Observe that 
\begin{eqnarray}\label{vaneq}
\chi[X, Y] = \sum_{i=1}^{k-2}(n_1b_i - m_1 a_i)\chi(x_i) + (n_1 b_{k-1} - m_1 a_{k-1})\chi(x_k).\medskip 
\end{eqnarray}
The assertion $X \in L_\chi$ is equivalent to saying that the right hand side of (\ref{vaneq})
vanishes for every choice of $b_i, m_j$ for $i=1,...,k$ and $j=1,2$. We shall use this observation
to show that $\dim L_\chi \geq k$ for all $\chi \in L^*$. If $\chi(x_i) \neq 0$ for some $i \in \{1,...,k-2\}$ 
then we may pick $Y$ by setting scalars $b_j = \delta_{i,j}$ and $m_1 = m_2 = 0$.
Now the vanishing of (\ref{vaneq}) ensures $n_1 = 0$. If $\chi(x_k) \neq 0$ then we may
pick $b_j = \delta_{j, k-1}$ and $m_1 = m_2 = 0$ to arrive at the conclusion $n_1 = 0$ similarly.

In either case, the assertion $X\in L_\chi$ is now equivalent to
$m_1 (\sum_{i=1}^{k-2} a_i \chi(x_i)) +  m_1 a_{k-1} \chi(x_k) = 0$. 
This final condition on $L_\chi$ is a single linear dependence
between the scalars $a_1,...,a_k$, and we conclude that $\dim L_\chi \geq k$
for all $\chi \in L^*$, and that this lower bound is attained whenever
$$(\chi(x_1), \chi(x_2) ..., \chi(x_{k-2}), \chi(x_k))\neq (0,0,...,0).$$
This proves $\ind L = k$.

Next observe that $[L', L'] = \langle x_1, x_2,...,x_{k-2}, x_k\rangle$, and so $\chi \in L'^*$ is
determined by  $(\chi(x_1), \chi(x_2) , ...,\chi(x_{k-2}), \chi(x_k)) \in \k^{k-1}$. We pick scalars as before so that
\begin{eqnarray*}
X = \sum_{i} a_i x_i + n_1 D + n_2 D';\\
Y = \sum_i b_i x_i + m_1 D + m_2 D'.
\end{eqnarray*}
are arbitrary elements of $L'$, and we have
$$\chi [X, Y] = \sum_{i=1}^{k-2}(n_1 b_i - m_1 a_i + n_2 b_i - m_2 a_i)\chi(x_i) + ( n_1 b_{k-1} - m_1 a_{k-1})\chi(x_k).$$
Now $X \in L'_\chi$ is equivalent to the vanishing of the right hand
side for every choice of $b_i, m_j$. It will suffice to exhibit $\chi$ such that $\dim L_\chi ' \leq k-2$.
To this end we take $\chi(x_1) = \chi(x_2) = \cdots = \chi(x_{k-2}) = \chi(x_k) = 1$.
Setting $b_j = \delta_{j, k-1}$ and $m_1 = m_2 = 0$ we obtain $n_1 = 0$,
whilst taking $b_j = \delta_{j,1}$ with $m_1 = m_2 =0$ subtends $n_2 = 0$.
Now take $b_i = 0$ for all $i$ and $m_1 = 0$, $m_2 = 1$ to get $\sum_{i=1}^{k-2} a_i = 0$.
Finally set $b_i = 0$ for all $i$ and $m_1 = 1$, $m_2 = 0$ to get $a_k = 0$. This shows that
$\dim L_\chi \leq k-2$, and we are done.
\end{proof}

\section{Proof of the theorem}

We let $L$ and $L'$ be the Lie algebras discussed in Lemma~\ref{indexLemma}. Our goal is to show that
(\ref{KWSt}) fails for $L$. Since the conjecture asserts that $M(L) = p^{\frac{1}{2}(\dim L - \ind L)} = p$ it
will suffice to show that $p^2 | M(L)$. By Proposition~\ref{isoprop} we know that $U(L) \cong U(L')$ which
implies that $M(L) = M(L')$. By Lemma~\ref{indexLemma} and \cite[\S 5.4, Remark 1]{PS99} there is a non-negative integer $s$ such that
$$p^{\frac{1}{2}(\dim L' - \ind L')} = p^{2 + s} | M(L').$$
$\hfill \qed$

\end{document}